\title{Perspectives on CUR Decompositions}
\newtheorem{theorem}{Theorem}[section]
\newtheorem{proposition}[theorem]{Proposition}
\newtheorem{lemma}[theorem]{Lemma}
\newtheorem{corollary}[theorem]{Corollary}
\newtheorem{thmnum}{Theorem}
\theoremstyle{definition}
\newtheorem{example}[theorem]{Example}
\newtheorem{problem}{Problem}
\newtheorem{remark}[theorem]{Remark}
\newcommand{\R}{\mathbb{R}}
\newcommand{\K}{\mathbb{K}}
\newcommand{\C}{\mathbb{C}}
\newcommand{\sspan}{\textnormal{span}}
\newcommand{\rank}{{\rm rank\,}}
\newcommand{\argmin}{\text{argmin}}
\begin{document}

\author{Keaton Hamm}
\address{Department of Mathematics, University of Arizona, Tucson, AZ 85719 USA}
\email{hamm@math.arizona.edu}

\author{Longxiu Huang}
\address{Department of Mathematics, University of California, Los Angeles, CA 90095 USA}
\email{huangl3@math.ucla.edu}

\keywords{CUR Decomposition; Low Rank Matrix Approximation; Column Subset Selection}
\subjclass[2010]{15A23,65F30,68P99}

\begin{abstract}
This note discusses an interesting matrix factorization called the CUR Decomposition.  We illustrate various viewpoints of this method by comparing and contrasting them in different situations.  Additionally, we offer a new characterization of CUR decompositions which synergizes these viewpoints and shows that they are indeed the same in the exact decomposition case.
\end{abstract}

\maketitle


\section{Introduction}



In many data analysis applications, two key tools are dimensionality reduction and compression, in which data obtained as vectors in a high dimensional Euclidean space are approximated in a basis or frame which spans a much lower dimensional space (reduction) or a sketch of the total data matrix is made and stored in memory (compression).  Without such steps as preconditioners to further analysis, many problems would be intractable.  However, one must balance the approximation method with the demand that any conclusions made from the approximated version of the data still be readily interpreted by domain experts. This task can be challenging, and many well-known methods (for instance PCA) allow for great approximation and compression of the data, but at the cost of inhibiting interpretation of the results using the underlying physics or application.

One way around this difficulty is to attempt to utilize the \textit{self-expressiveness} of the data, which is the notion that oftentimes data is better represented in terms of linear combinations of other data points rather than in some abstract basis.  In many applications data is self-expressive, and methods based on this assumption achieve rather good results in various machine learning tasks (as a particular example, we refer the reader to the Sparse Subspace Clustering algorithm of Elhamifar and Vidal \cite{SSC}).  The question then is: how may one use self-expressiveness to achieve dimensionality reduction? Mahoney and Drineas \cite{DMPNAS} argue for the representation of a given data matrix in terms of \textit{actual columns and rows} of the matrix itself.  The idea is that, rather than do something like PCA to transform the data into an eigenspace representation, one attempts to choose the most representative columns and rows which capture the essential information of the initial matrix.  Thus enters the \textit{CUR Decomposition}.

There are two distinct starting points in most of the literature revolving around the CUR decomposition (also called (pseudo)skeleton approximations \cite{DemanetWu,Goreinov}).  The first is as an exact matrix \textit{decomposition}, or \textit{factorization}: given an arbitrary, and possibly complicated matrix $A$, one may desire to decompose $A$ into the product of 2 or more factors, each of which is ``easier" to understand, store, or compute with than $A$ itself. The exact decomposition (see Theorem \ref{THM:CUR} for a formal statement) says that $A=CU^\dagger R$ if $\rank(U)=\rank(A)$, where $C$ and $R$ are column and row submatrices of $A$, respectively, and $U$ is their intersection.   The second starting point is to find a \textit{low-rank approximation} to a given matrix. This is along the lines of the work of Drineas, Kannan, Mahoney, and others \cite{DKMIII,DM05,DMM08} and stems from the considerations of interpretability above.
This is the typical vantage point of much of randomized linear algebra (e.g., \cite{tropp}).  In this setting, the CUR approximation of a matrix is typically given by $A\approx CC^\dagger AR^\dagger R$, where $CC^\dagger$ and $R^\dagger R$ are orthogonal projections onto the subspaces spanned by given columns and rows, respectively.  These perspectives are not typically addressed in the literature together (indeed, the first perspective is taken but rarely).  

This short note compares and contrasts these two viewpoints of CUR, showing that they are indeed the same in the exact decomposition case, but distinct in the approximation case. Moreover, we obtain a characterization of CUR decompositions which provides some interesting and unexpected equivalences.

\section{Main Result}

The following is the main result of this note; here $\K$ is either $\R$ or $\C$, and $A^\dagger$ is the Moore--Penrose pseudoinverse of $A$ (see Section \ref{SEC:Notation} for the full definition; briefly, if $A=U\Sigma V^*$ is the SVD of $A$, then $A^\dagger = V\Sigma^\dagger U$, where the diagonal of $\Sigma^\dagger$ contains the reciprocal of the elements of $\Sigma$).    

\begin{thmnum}[Theorem  \ref{THM:Characterization} and Proposition \ref{PROP:Udagger}]
Let $C=A(:,J)$ and $R=A(I,:)$ be column and row submatrices of $A\in\K^{m\times n}$ (possibly with repeated columns/rows), and let $U=A(I,J)$ be their intersection.  Then the following are equivalent:
\begin{enumerate}[(i)]
    \item $\rank(U)=\rank(A)$
    \item $A=CU^\dagger R$
    \item $A = CC^\dagger AR^\dagger R$
    \item $A^\dagger = R^\dagger UC^\dagger$
    \item $\rank(C)=\rank(R)=\rank(A)$.
\end{enumerate}
Moreover, if any of the equivalent conditions above hold, then $U^\dagger = C^\dagger AR^\dagger$.
\end{thmnum}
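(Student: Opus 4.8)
The plan is to route every implication through the rank condition (i), which serves as a hub. First I would record the elementary rank chain that always holds: since $U$ is a row-submatrix of $C$ and a column-submatrix of $R$, while $C,R$ are respectively column- and row-submatrices of $A$, we have $\rank(U)\le\rank(C)\le\rank(A)$ and $\rank(U)\le\rank(R)\le\rank(A)$. The implication (i)$\Rightarrow$(v) is then immediate by squeezing. For the converse (v)$\Rightarrow$(i), I would use that $\rank(C)=\rank(A)$ forces $\mathrm{col}(C)=\mathrm{col}(A)$, so $A=CW$ for some $W$; restricting to the rows indexed by $I$ gives $R=C(I,:)W=UW$, whence $\rank(R)\le\rank(U)$, and combined with $\rank(U)\le\rank(R)=\rank(A)$ this yields $\rank(U)=\rank(A)$.

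For (ii), the forward direction (i)$\Rightarrow$(ii) is exactly the exact-decomposition statement of Theorem \ref{THM:CUR}, which I would invoke; the reverse (ii)$\Rightarrow$(i) is another squeeze, since $A=CU^\dagger R$ gives $\rank(A)\le\rank(U)$. For (iii), I would note that under the rank conditions $CC^\dagger$ and $R^\dagger R$ are the orthogonal projections onto $\mathrm{col}(A)$ and $\mathrm{row}(A)$ respectively, so $CC^\dagger A=A$ and $AR^\dagger R=A$, and composing gives $CC^\dagger AR^\dagger R=A$; this proves (i)$\Rightarrow$(iii). Conversely, $A=CC^\dagger AR^\dagger R$ exhibits $\mathrm{col}(A)\subseteq\mathrm{col}(C)$ and $\mathrm{row}(A)\subseteq\mathrm{row}(R)$, forcing $\rank(C)=\rank(R)=\rank(A)$, i.e.\ (iii)$\Rightarrow$(v).

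The substantive work is (iv) together with the ``moreover'', and here I would first assemble the key identities valid under the rank conditions. Because $\mathrm{row}(U)=\mathrm{row}(C)$ and $\mathrm{col}(U)=\mathrm{col}(R)$ (equality of spaces, since the ranks agree), the corresponding orthogonal projections coincide: $C^\dagger C=U^\dagger U$ and $RR^\dagger=UU^\dagger$. Feeding these into $A=CU^\dagger R$ and using $U^\dagger UU^\dagger=U^\dagger$ yields the reductions $AR^\dagger=CU^\dagger$ and $C^\dagger A=U^\dagger R$, and the ``moreover'' identity $C^\dagger AR^\dagger=(C^\dagger C)U^\dagger(RR^\dagger)=U^\dagger UU^\dagger UU^\dagger=U^\dagger$. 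To prove (i)$\Rightarrow$(iv) I would then verify that $X:=R^\dagger UC^\dagger$ satisfies the four Moore--Penrose conditions for $A$: $AXA=(AR^\dagger)U(C^\dagger A)=CU^\dagger UU^\dagger R=A$; $XAX=R^\dagger U(C^\dagger AR^\dagger)UC^\dagger=R^\dagger UU^\dagger UC^\dagger=X$; and $AX=(AR^\dagger)UC^\dagger=CU^\dagger UC^\dagger=CC^\dagger$, $XA=R^\dagger U(C^\dagger A)=R^\dagger UU^\dagger R=R^\dagger R$, both of which are Hermitian since they are orthogonal projections, giving $(AX)^*=AX$ and $(XA)^*=XA$. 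Uniqueness of the pseudoinverse then yields $A^\dagger=R^\dagger UC^\dagger$. Finally (iv)$\Rightarrow$(i) is one more squeeze: $\rank(A)=\rank(A^\dagger)=\rank(R^\dagger UC^\dagger)\le\rank(U)$.

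I expect the main obstacle to be the bookkeeping in (iv): the verification hinges on first upgrading the rank hypotheses to the space equalities $\mathrm{row}(U)=\mathrm{row}(C)$ and $\mathrm{col}(U)=\mathrm{col}(R)$ and then converting them into the projection identities $C^\dagger C=U^\dagger U$ and $RR^\dagger=UU^\dagger$, since without these the products $AR^\dagger$, $C^\dagger A$, $AX$, and $XA$ do not collapse. Everything else is repeated application of $U^\dagger UU^\dagger=U^\dagger$ and the squeeze argument for ranks, so the entire theorem reduces to that one structural observation about coinciding projections, together with the single invocation of Theorem \ref{THM:CUR} for (i)$\Rightarrow$(ii).
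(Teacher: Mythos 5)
Your proposal is correct, and although it keeps the paper's hub-and-spoke structure through condition (i) and reproduces verbatim the paper's two central ingredients---the projection identities $C^\dagger C=U^\dagger U$ and $RR^\dagger=UU^\dagger$ (the paper's Lemma \ref{LEM:Projections}) and the computation $C^\dagger AR^\dagger=U^\dagger UU^\dagger UU^\dagger=U^\dagger$ for the moreover part (Proposition \ref{PROP:Udagger})---it takes a genuinely different route at three substantive points, each time trading an SVD-based argument for an elementary one. For (v)$\Rightarrow$(i), the paper's Corollary \ref{CR_RANK_U} factors $A$, $C$, $R$, $U$ through the truncated SVD and invokes Sylvester's rank inequality (Lemma \ref{rankE}), whereas your argument ($\mathcal{R}(C)=\mathcal{R}(A)$ gives $A=CW$; restricting rows gives $R=UW$, hence $\rank(R)\leq\rank(U)$) avoids the SVD entirely. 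For (i)$\Rightarrow$(iv), the paper first proves $U=RA^\dagger C$ (Proposition \ref{PROP:U=RAC}, again via the SVD) and then collapses $R^\dagger UC^\dagger=A^\dagger AA^\dagger AA^\dagger=A^\dagger$; you instead verify the four Penrose axioms for $X=R^\dagger UC^\dagger$ directly, which costs more bookkeeping but is self-contained, and your intermediate identities $AR^\dagger=CU^\dagger$ and $C^\dagger A=U^\dagger R$ are a clean byproduct the paper never isolates---the trade-off is that the paper's $U=RA^\dagger C$ holds with no rank hypothesis at all and is of independent interest, while your verification establishes only what the theorem needs. For (iv)$\Rightarrow$(i), your one-line squeeze $\rank(A)=\rank(A^\dagger)=\rank(R^\dagger UC^\dagger)\leq\rank(U)$ is genuinely shorter than the paper's route through $A=AR^\dagger RA^\dagger CC^\dagger A$ followed by a second appeal to Corollary \ref{CR_RANK_U}. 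One cosmetic caveat: since the statement allows repeated columns and rows, your invocation of Theorem \ref{THM:CUR} for (i)$\Rightarrow$(ii) should formally pass through Remark \ref{REM:CUR}, as the paper does, though no new argument is needed.
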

This theorem reconciles the exact CUR decomposition $A=CU^\dagger R$ with the CUR approximation given by $A\approx CC^\dagger AR^\dagger R$.  The latter is the best CUR approximation in a sense that is made precise in the sequel.  The moreover part (and the equivalence of (\ref{ITEM:Adagger})) is interesting in its own right because it is not generally the case that $(AB)^\dagger = B^\dagger A^\dagger.$

The rest of the paper develops as follows: Section \ref{SEC:Notation} establishes the notation used throughout the sequel; Section \ref{SEC:Viewpoints} derives both the exact CUR decomposition and the CUR approximation, which are the primary viewpoints given in the literature.  These methods are compared and shown to be equivalent in the exact decomposition case in Section \ref{SEC:Proof}, and their difference is illustrated in the approximation case (Section \ref{SEC:Difference}).    We also give a history of the decomposition and its influences in Section \ref{SUBSEC:History}.

\section{Notations}\label{SEC:Notation}

Here, the symbol $\K$ will represent either the real or complex field, and we will denote by $[n]$, the set of integers $\{1,\dots,n\}$.  In the sequel, we will often have occasion to speak of submatrices of a matrix $A\in\K^{m\times n}$ with respect to certain columns and rows.  For this, we will use the notation $A(I,:)$ to denote the $|I|\times n$ row submatrix of $A$ consisting only of those rows of $A$ indexed by $I\subset[m]$, and likewise $A(:,J)$ will denote the $m\times|J|$ column submatrix of $A$ consisting only of those columns of $A$ indexed by $J\subset[n]$.  Therefore, $A(I,J)$ will be the $|I|\times|J|$ submatrix of those entries $a_{ij}$ of $A$ for which $(i,j)\in I\times J$.  We will also allow columns and rows to be repeated, but we still use the same notation in this case.  Here, $C$ will always denote $A(:,J)$ for some $J\subset[n]$ (possibly with repeated entries), $R$ to denote $A(I,:)$ for $I\subset[m]$, and $U=A(I,J)$.

The Singular Value Decomposition (SVD) of a matrix $A$ will typically be denoted by $A=W_A\Sigma_AV_A^*$ with the use of $W$ being preferred to the typical usage of $U$ since the latter will stand for the middle matrix in the CUR decomposition.  The truncated SVD of order $k$ of a matrix $A$ will be denoted by $A_k=W_k\Sigma_kV_k^*$, where $W_k$ comprises the first $k$ left singular vectors, $\Sigma_k$ is a $k\times k$ matrix containing the largest $k$ singular values, and $V_k$ comprises the first $k$ right singular vectors.  We will always assume that the singular values are positioned in descending order, and label them $\sigma_1\geq\sigma_2\geq\dots\geq\sigma_r\geq0$. To specify the matrix involved, we may also write $\sigma_i(A)$ for the $i$--th singular value of $A$.

Given a matrix $A\in\K^{m\times n}$, its Moore--Penrose pseudoinverse will be denoted by $A^\dagger\in\K^{n\times m}$.  We recall for the reader that this pseudoinverse is unique and satisfies the following properties: (i) $AA^\dagger A = A$, (ii) $A^\dagger AA^\dagger = A^\dagger$, and (iii) $AA^\dagger$ and $A^\dagger A$ are Hermitian.  Additionally, given the SVD of $A$ as above we have a simple expression for its pseudoinverse as $A^\dagger=V_A\Sigma_A^\dagger W_A^*$, where $\Sigma^\dagger$ is the $n\times m$ matrix with diagonal entries $\frac{1}{\sigma_i(A)}$, $i=1,\dots,k=\rank(A)$. 

Our analysis will consider a variety of matrix norms.  Some of the most important are the spectral norm $\|A\|_2:=\sup\{\|Ax\|_2:x\in S_{\K^n}\}$, where $S_{\K^n}$ is the unit sphere of $\K^n$ (in the Euclidean norm).  It is a useful fact that $\|A\|_2 = \sigma_1(A)$.  The Frobenius norm is another common norm that will be used, and has an entrywise definition, but also can be represented using the singular values as well, to wit
\[ \|A\|_F := \left(\sum_{i,j}a_{i,j}^2\right)^\frac12 = \left(\sum_i \sigma_i(A)^2\right)^\frac12.\]  We also utilize the Schatten $p$--norms, $1\leq p\leq\infty$ (which are defined by the right-most term in the Frobenius norm expression above but with the $\ell_2$ norm of the singular values replaced with an $\ell_p$ norm). 

Finally, we will use $\mathcal{N}(A)$ and $\mathcal{R}(A)$ to denote the nullspace and range of $A$, respectively.

\section{Two viewpoints on CUR}\label{SEC:Viewpoints}



In this section, we will illustrate the conclusion one would obtain for a CUR decomposition or approximation based on each launching point mentioned here.  

\subsection{Exact Decomposition: $A = CU^\dagger R$}

For our first tale regarding CUR, we begin by asking the question: given a matrix $A\in\K^{m\times n}$ with rank $k<\min\{m,n\}$, can we decompose it into terms involving only some of its columns and some of its rows?  Particularly, if we choose $k$ columns of $A$ which span the column space of $A$ and $k$ rows which span the row space of $A$, then we should be able to stitch these linear maps together to get $A$ itself back.   The answer, as it turns out, is yes as the following theorem shows (the intuition of the previous statement will be demonstrated in a subsequent figure).

\begin{theorem}\label{THM:CUR}
Let $A\in\K^{m\times n}$ have rank $k$, and let $I\subset[m]$ and $J\subset[n]$ with $|I|=t\geq k$ and $|J|=s\geq k$.  Let $C=A(:,J)$, $R=A(I,:)$, and $U=A(I,J)$.  If $\rank(U)=\rank(A)$, then 
\[ A = CU^\dagger R.\]
\end{theorem}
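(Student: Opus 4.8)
The plan is to exhibit an explicit coefficient matrix expressing $A$ through the columns of $C$, and then to show that applying $U^\dagger R$ reproduces exactly that coefficient matrix once we multiply back by $C$. I would begin with the rank bookkeeping. Since $U=C(I,:)$ is a row submatrix of $C$ and simultaneously $U=R(:,J)$ is a column submatrix of $R$, we have $\rank(U)\le\rank(C)\le\rank(A)=k$ and $\rank(U)\le\rank(R)\le\rank(A)=k$. The hypothesis $\rank(U)=k$ therefore forces $\rank(C)=\rank(R)=k$. In particular the $s$ columns of $C$ span a $k$-dimensional subspace, which must be $\mathcal{R}(A)$, so every column of $A$ lies in $\mathcal{R}(C)$ and there is a matrix $X\in\K^{s\times n}$ with $A=CX$ (for concreteness one may take $X=C^\dagger A$).

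Next I would restrict this identity to the rows indexed by $I$. Reading off the rows in $I$ from $A=CX$ gives $R=A(I,:)=C(I,:)X=UX$, so that $U^\dagger R=U^\dagger U X$. Thus proving the theorem amounts to showing that $C U^\dagger U X = CX$, i.e.\ that the discrepancy $C(I-U^\dagger U)X$ vanishes.

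The crux is a nullspace identity: I claim $\mathcal{N}(C)=\mathcal{N}(U)$ as subspaces of $\K^s$. The inclusion $\mathcal{N}(C)\subseteq\mathcal{N}(U)$ is immediate, since $Cv=0$ implies in particular that the entries of $Cv$ indexed by $I$ vanish, and those entries are exactly $Uv$. Because $\rank(C)=\rank(U)=k$, both nullspaces have dimension $s-k$, so the inclusion is forced to be an equality. This is the one place where the rank hypothesis does the real work, and I expect establishing this equality to be the main obstacle to handle carefully.

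Finally I would assemble the pieces. Writing $X=U^\dagger U X+(I-U^\dagger U)X$ and noting that $I-U^\dagger U$ is the orthogonal projection onto $\mathcal{N}(U)=\mathcal{N}(C)$, the second summand has all of its columns in $\mathcal{N}(C)$ and is therefore annihilated by $C$. Hence
\[
CU^\dagger R = CU^\dagger U X = C\bigl(X-(I-U^\dagger U)X\bigr) = CX = A,
\]
which is the claimed identity.
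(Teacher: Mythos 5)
Your proof is correct, and while it opens the same way as the paper's --- the rank bookkeeping forcing $\rank(C)=\rank(R)=\rank(A)$, the factorization $A=CX$, and the row restriction giving $R=UX$ --- the decisive step is genuinely different. The paper argues at the level of solution sets of linear systems: it claims that because row selection by $I$ only ``eliminates rows that are linearly dependent on the rest,'' any solution of the reduced system $R=UX$ must solve $A=CX$, and it then needs a second, column-selection pass (via $UU^\dagger U = U$ and the selector $P_J$) to verify that $X=U^\dagger R$ actually solves $R=UX$. You instead prove the nullspace identity $\mathcal{N}(C)=\mathcal{N}(U)$ by the obvious inclusion plus a dimension count, and exploit that $\mathrm{Id}-U^\dagger U$ is the orthogonal projector onto $\mathcal{N}(U)$, so $C(\mathrm{Id}-U^\dagger U)=0$ and $CU^\dagger R = CU^\dagger UX = CX = A$ in one stroke, with no second selection argument at all. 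Your route buys rigor precisely where the paper is most informal (its solution-set equivalence is asserted rather than argued in detail), and it is notable that your key identity is exactly what the paper later isolates in Lemma \ref{LEM:Projections} ($\mathcal{N}(C)=\mathcal{N}(U)$, equivalently $C^\dagger C = U^\dagger U$); your proof in effect front-loads that lemma, which would let the paper shorten its later development. What the paper's version buys in exchange is the operational intuition that choosing spanning rows loses no information about the system $A=CX$, which matches the commuting-diagram picture it draws. One cosmetic caution: you use $I$ both for the row index set and for the identity matrix in $\mathrm{Id}-U^\dagger U$; pick distinct symbols to avoid the collision.
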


\begin{proof}
Given the constraint on $U$, it follows that $\rank(C)=\rank(R)=\rank(A)$.  Thus, there exists an $X\in\K^{s\times n}$ such that $A=CX$ (in fact there are infinitely many if $s>\rank(A)$).  Now suppose that $P_I$ is the row-selection matrix which picks out the rows of $A$ according to the index set $I$.  That is, we have $P_IA = R$, and likewise $P_IC = U$.  Then the following holds:
\begin{displaymath}
\begin{array}{lll}
A = CX & \Leftrightarrow & P_IA = P_ICX\\
& \Leftrightarrow & R = UX.\\
\end{array}
\end{displaymath}
The second equivalence is by definition of $U, R$, and $P_I$, while the forward direction of the first equivalence is obvious.  The backward direction holds by the assumption on the rank of $U$, and hence on $C$ and $R$.  That is, the row-selection matrix $P_I$ eliminates rows that are linearly dependent on the rest, and so any solution to $P_IA = P_ICX$ must be a solution to $A=CX$.  Finally, it suffices to show that $X = U^\dagger R$ is a solution to $R=UX$.  By the same argument as above, it suffices to show that $U^\dagger R$ is a solution to $RP_J=U = UXP_J$ if $P_J$ is a column-selection matrix which picks out columns according to the index set $J$.  Thus, noting that $UU^\dagger RP_J = UU^\dagger U = U$ completes the proof.
\end{proof}

\begin{figure}[ht]
\includegraphics[scale=0.4]{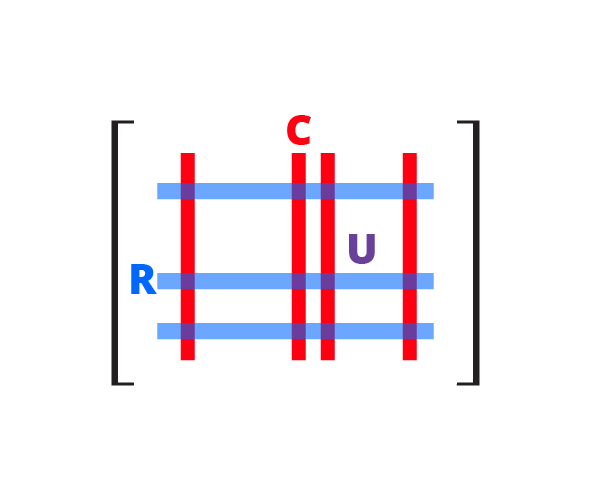}
\caption{Illustration of the CUR decomposition. $C$ is the red column submatrix of $A$, while $R$ is the blue row submatrix of $A$, and $U$ is their intersection (naturally purple).}\label{FIG:CUR}
\end{figure}

Figure \ref{FIG:CUR} provides an illustration of the CUR decomposition from a matrix point of view, whereas Figure \ref{FIG:CURSpace2} shows the intuition of the decomposition based on the viewpoint of the linear operators that the matrices represent.

\begin{remark}\label{REM:CUR}
Upon careful examination of the proof of Theorem \ref{THM:CUR}, we observe that the conclusion $A=CU^\dagger R$ holds also in the event that columns and rows of $A$ are repeated.  That is, if $I=\{i_1,\dots,i_t\}$, $i_k\in[n]$ and $J=\{j_1,\dots,j_s\}$, $j_k\in[m]$ (where the $i_k$ and $j_k$ are not necessarily distinct), and $C$ consists of columns $A(:,i_k)$, $k\in[t]$ and $R$ consists of rows $A(j_k,:)$, $k\in[s]$, with $U_{k,\ell}=A_{i_k,j_\ell}$, then provided $\rank(U)=\rank(A)$, we have $A=CU^\dagger R$.  
\end{remark}
\begin{figure}[ht]
		\includegraphics[scale=0.7]{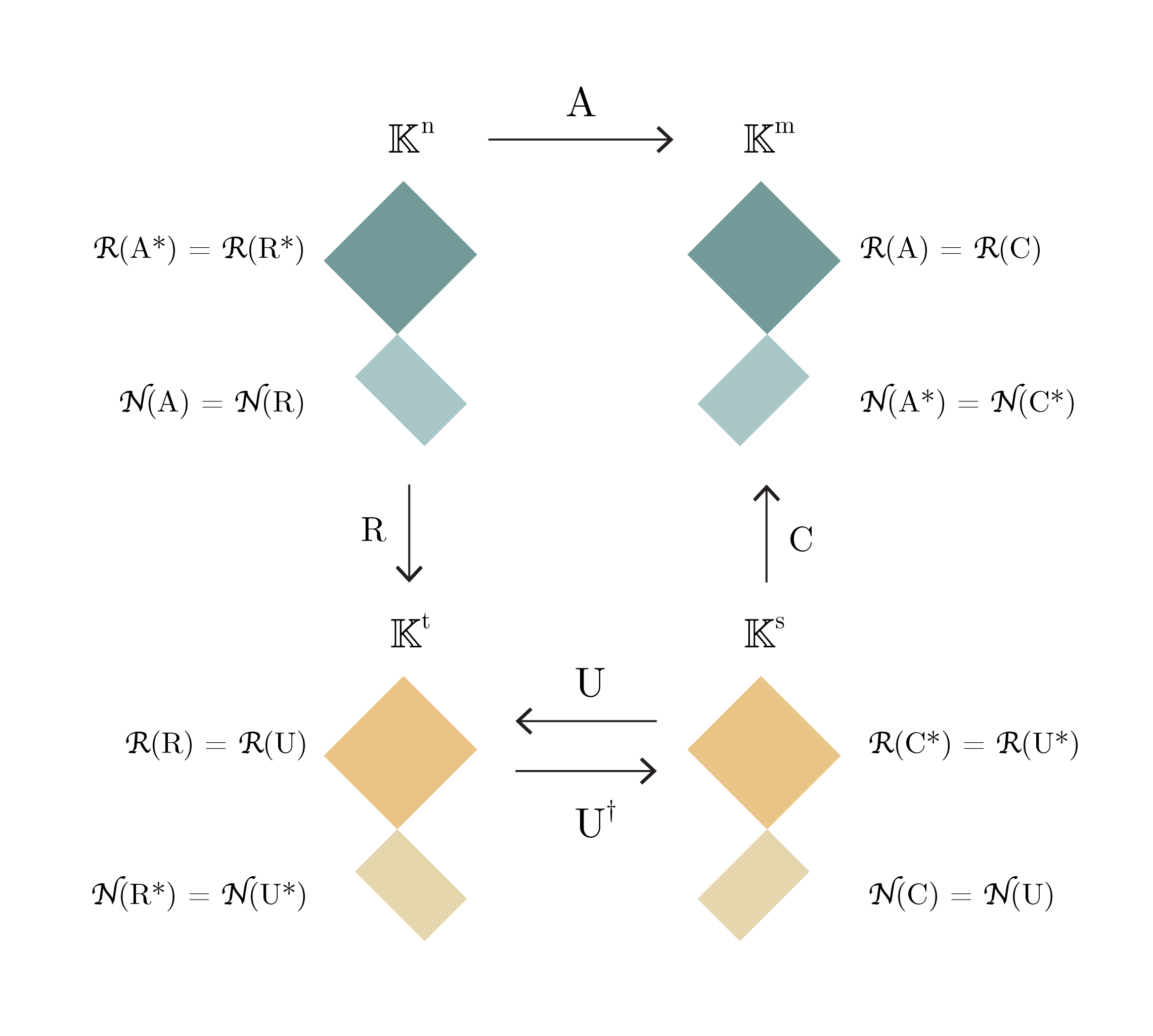}
		\caption{A diagram of the CUR decomposition viewed as linear operators. The illustration of the decomposition of the spaces is inspired by \cite{Strang}.  Because $R$ and $C$ capture the essential information of $A$, the null spaces correspond as shown (see also Lemma \ref{LEM:Projections}), and $U^\dagger$ is the inverse of $U$ on its range, which allows the diagram to commute.}\label{FIG:CURSpace2}
\end{figure}

\subsection{Best Low Rank Approximation: $A \approx CC^\dagger AR^\dagger R$}

A low rank \textit{approximation} of a matrix $A$ is a matrix with small (compared to the size of the matrix) rank which is ideally close to $A$ in norm.  Many low rank approximation (and indeed decomposition) methods begin with the idea that perhaps a basis other than the canonical one is a ``better" basis in which to represent a given matrix, where the term better is vague and dependent upon the context.  The second viewpoint on CUR stems from this same idea and is, in our opinion, the one more closely tied to those interested in data science, whether in theory or practice.  So with that in mind, suppose that we would like to take a matrix $A\in\K^{m\times n}$, and find a rank $k$ approximation to it given some fixed $k$.  Of course, it is well known that if one desires the \textit{best} rank $k$ approximation to $A$, then one needs look no further than its truncated SVD.  That is, if $W_k, \Sigma_k, V_k$ are the first $k$ left singular vectors, singular values, and right singular vectors, respectively, then one has
\begin{equation}\label{EQ:SVD Minimizer} \underset{X: \rank(X)=k}{\text{argmin}}\; \|A-X\|_{\xi} = W_k\Sigma_kV_k^*\end{equation} in the case that $\xi$ is any Schatten $p$--norm.

Given this observation, the reader might be forgiven for thinking, why should one look any further for a low-rank approximation for $A$ when the SVD provides the best?  Well, this is not the whole story, of course.  One reason to continue the search is that the SVD fails to preserve much of the structure of $A$ as a matrix.  For example, suppose that $A$ is a sparse matrix; then in general $U$ and $V$, hence $W_k$ and $V_k$, fail to be sparse.  Therefore, the SVD does not necessarily remain faithful to the matricial structure of $A$.  For yet another reason, we turn to the wisdom of Mahoney and Drineas \cite{DMPNAS}.  As they point out, a major factor in analyzing data is interpretability of the results.  Consequently, if one manipulates data in some fashion, one must do it in such a way as to still be able to make a meaningful conclusion.

Let us take the SVD as a case in point: suppose in a medical study, a researcher observes a large number $n$ of gene expression levels in $m$ patients and concatenates the data into an $m\times n$ matrix $A$. Supposing the desired outcome is to determine which genes are most indicative of cancer risk in patients, the researcher attempts to reduce the dimension of the data significantly, and so takes the truncated SVD of this matrix, and looks at the data in the $k$--dimensional basis $W_k$.  But what does a singular vector in $W_k$ correspond to?  It will generally be a linear combination of the genes; so what would it mean, say, that the first two singular vectors capture the majority of information in the data if the singular vectors are combinations of all of the gene expressions?  In using the SVD, interpretability of the data has been utterly lost.

Finally, computing the full SVD of a matrix $A$ is expensive (the na\"{i}ve direct algorithm requires $O(\min\{mn^2, nm^2\})$ operations).  Nonetheless, it is more economical to compute the truncated SVD; indeed, computing $A_k$ requires only $O(mnk)$ operations \cite{tropp}.

So what would be a better alternative?  It seems natural to ask: can we choose only a few representative columns of our data matrix $A$ such that they essentially capture all of the necessary information about $A$?  Or more precisely, can we project $A$ onto the space spanned by some representative columns such that the result is close to $A$ in norm?  Unsurprisingly, this problem is important enough to be named, and is typically called the \textit{Column Subset Selection Problem}.  Before stating the problem, consider the preliminary observation that if $C$ is a column submatrix of $A$, then the following holds for $\xi = 2$ or $F$ \cite[Theorem 10.B.7]{MOA_2011}:
\begin{equation}
\label{EQ:MinA-CX}\underset{X}{\argmin}\;\|A-CX\|_{\xi} = C^\dagger A.  \end{equation} 
This property follows from the fact that the Moore--Penrose pseudoinverse gives the least-squares solution to a system of equations.  With this observation in hand, combined with the knowledge that $CC^\dagger:\K^m\to\K^m$ is the projection operator onto the column space of $C$, the Column Subset Selection Problem may be stated as follows.

\begin{problem}[Column Subset Selection Problem]
Given a matrix $A\in\K^{m\times n}$ and a fixed $k\in[n]$, find a column submatrix $C=A(:,J)$ which solves the following:
\[ \underset{J\subset[n], |J|=k}{\underset{C=A(:,J)}{\min}}\; \|A-CC^\dagger A\|_\xi,\]
where $\xi$ is a norm allowed to be specified -- typically chosen to be either $2$ or F.
\end{problem}

The astute observer will notice that this problem is difficult in general (more on its complexity in Section \ref{SUBSEC:History}).  Indeed, there are $\binom{n}{k}$ choices of matrices $C$ over which to minimize.  However, let us set this difficulty aside for the moment and return to the problem at hand.  Equation \eqref{EQ:MinA-CX} tells us that given a column submatrix $C$ which solves the Column Selection Problem, $A$ is best represented by $A\approx CC^\dagger A$.  But why stop there?  We may as well also select some rows of $A$ which best capture the essential information of its row space.  Similar to \eqref{EQ:MinA-CX}, one may easily show that, given a row submatrix $R$ of $A$, the following holds for $\xi=2$ or $F$:
\[ \underset{X}\argmin\;\|A-XR\|_\xi = AR^\dagger.\]
Therefore, we now wish to find the best rows which minimize the argument above.  Rather than calling this the ``Row Subset Selection Problem," simply note that this is equivalent to solving the Column Subset Selection Problem on $A^*$.

It is now natural to stitch these tasks together, and attempt to find the minimizer of $\|A-CZR\|_\xi$ given a fixed $C$ and $R$.

\begin{proposition}[\cite{stewart_minimizer}]\label{PROP:CAR}
Let $A\in\K^{m\times n}$ and $C$ and $R$ be column and row submatrices of $A$, respectively.  Then the following holds:
\[ \underset{Z}{\text{argmin}}\;\|A-CZR\|_F = C^\dagger AR^\dagger.\]
\end{proposition}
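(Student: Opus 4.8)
The plan is to exploit the fact that, unlike the spectral norm, the Frobenius norm arises from the inner product $\langle M,N\rangle = \mathrm{tr}(M^*N)$, so I can attack the minimization through an orthogonal (Pythagorean) splitting rather than a direct variational computation. Set $P := CC^\dagger$ and $Q := R^\dagger R$; by property (iii) of the pseudoinverse these are Hermitian, and they are clearly idempotent, so $P$ and $Q$ are the orthogonal projections onto $\mathcal{R}(C)$ and $\mathcal{R}(R^*)$ respectively. The candidate minimizer is $Z_0 := C^\dagger A R^\dagger$, for which $CZ_0R = CC^\dagger A R^\dagger R = PAQ$.

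First I would record that every admissible product is fixed by the two-sided projection, namely $CZR = P(CZR)Q$, which is immediate from $CC^\dagger C = C$ and $RR^\dagger R = R$. Hence for an arbitrary $Z$ I may split
\[ A - CZR = (A - PAQ) + (PAQ - CZR), \]
where the second summand $D := PAQ - CZR$ again satisfies $D = PDQ$, since $P(PAQ)Q = PAQ$ and $P(CZR)Q = CZR$.

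The key step is to show the two summands are Frobenius-orthogonal, so the cross term vanishes. Using $P^*=P$, $Q^*=Q$ and the adjoint identity $\langle X, PYQ\rangle = \langle PXQ, Y\rangle$, I obtain
\[ \langle A - PAQ,\, D\rangle = \langle A-PAQ,\, PDQ\rangle = \langle P(A-PAQ)Q,\, D\rangle = 0, \]
because $P(A-PAQ)Q = PAQ - P^2AQ^2 = 0$ by idempotency. The Pythagorean identity then yields $\|A-CZR\|_F^2 = \|A-PAQ\|_F^2 + \|PAQ - CZR\|_F^2$, whose first term is independent of $Z$. Thus any minimizer must achieve $CZR = PAQ$, which $Z_0 = C^\dagger A R^\dagger$ does.

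The remaining, minor, obstacle is uniqueness: when $C$ lacks full column rank or $R$ full row rank, many $Z$ realize $CZR = PAQ$, so the ``argmin'' must be read as the minimal-Frobenius-norm solution. I would close by invoking that the minimal-norm solution of $CZR = B$ is $C^\dagger B R^\dagger$; substituting $B = PAQ = CC^\dagger A R^\dagger R$ and simplifying with pseudoinverse identity (ii) (namely $C^\dagger C C^\dagger = C^\dagger$ and $R^\dagger R R^\dagger = R^\dagger$) collapses this to $C^\dagger A R^\dagger$, confirming the stated form. I expect the genuinely delicate point to be the orthogonality computation, and in particular its reliance on $CC^\dagger$ and $R^\dagger R$ being Hermitian, which is precisely why the result is asserted only for $\xi = F$ and need not persist verbatim in the spectral norm. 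As alternatives worth noting, one could vectorize via $\mathrm{vec}(CZR) = (R^\top \otimes C)\,\mathrm{vec}(Z)$ and combine $(R^\top\otimes C)^\dagger = (R^\dagger)^\top \otimes C^\dagger$ with the least-squares property \eqref{EQ:MinA-CX}, or simply solve the normal equations $C^*CZRR^* = C^*AR^*$ directly.
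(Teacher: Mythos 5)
Your proof is correct, and in fact it supplies an argument the paper deliberately omits: Proposition \ref{PROP:CAR} is stated there only with a citation to \cite{stewart_minimizer}, with no internal proof, so there is no in-paper argument to match step for step. Your route is the classical one (it is essentially Penrose's best-approximate-solution argument for $CZR=B$ from \cite{Penrose56}, which the paper cites elsewhere only for history): you identify $P=CC^\dagger$ and $Q=R^\dagger R$ as orthogonal projections, observe $CZR = P(CZR)Q$, split $A-CZR$ into $(A-PAQ)+(PAQ-CZR)$, and kill the cross term using Hermitian idempotency via $\langle X, PYQ\rangle = \langle PXQ, Y\rangle$ and $P(A-PAQ)Q=0$ --- every step checks out over both $\R$ and $\C$, and the reduction to the (consistent, since $Z_0=C^\dagger AR^\dagger$ solves it) equation $CZR=PAQ$ is clean. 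You are also right to flag the one genuine imprecision in the statement as given: when $C$ lacks full column rank or $R$ lacks full row rank, the set of minimizers is an affine family, and $C^\dagger AR^\dagger$ is distinguished only as the minimal-Frobenius-norm minimizer; your verification that the minimal-norm solution $C^\dagger(CC^\dagger AR^\dagger R)R^\dagger$ collapses to $C^\dagger AR^\dagger$ via $C^\dagger CC^\dagger = C^\dagger$ and $R^\dagger RR^\dagger = R^\dagger$ settles this. As a bonus, your isolation of exactly where Hermitianness of $CC^\dagger$ and $R^\dagger R$ enters explains why the result is asserted only for $\xi=F$, consistent with the paper's two spectral-norm counterexamples following the proposition, one of which exhibits precisely the continuum of minimizers your uniqueness discussion anticipates.
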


Given the result of Proposition \ref{PROP:CAR}, much of the literature surrounding the CUR decomposition takes
\[ A\approx CC^\dagger AR^\dagger R\]
to be the CUR decomposition of $A$.  However, to be more precise, we will herein term this a CUR \textit{approximation} of $A$.

It should be noted that Proposition \ref{PROP:CAR} is not true for spectral norm as the follow example demonstrates.
\begin{example}
Consider \[A=\begin{bmatrix}
1& 1\\
1&2\\
\end{bmatrix}, C=\begin{bmatrix}
1\\
1\\
\end{bmatrix}, R=\begin{bmatrix}
1&1\\
\end{bmatrix}.\]  First note that in this case when evaluating $\text{argmin}_z\|A-CzR\|_\xi$, $z$ is simply a scalar. It is a simple exercise to demonstrate that $\|A-CzR\|_F^2 = 3(1-z)^2+(2-z)^2$, which is minimized whenever $z=\frac54 = C^\dagger AR^\dagger$. However, under the 2-norm, one can show that $z=\frac{5}{4}$ is not the optimal solution by considering $z=\frac{3}{2}$. 
 Thus Proposition \ref{PROP:CAR} does not hold for the spectral norm in general.
\end{example}

\begin{example}
Another example of a different sort is to take $A = I$, and $C$ and $R$ to again be the first column and row, respectively.  Then $C^\dagger AR^\dagger=1$, but the eigenvalues of $A-CzR$ are $1$ and $1-z$.  So any $z\in[0,2]$ yields a minimum value for $\|A-CzR\|_2$ of 1.  This example also illustrates the key fact that there may be a continuum of matrices $Z$ for which $\|A-CZR\|_2$ is constant.
\end{example}

\section{Characterization of the Exact Decomposition}\label{SEC:Proof}

Previously, we discussed two starting points and conclusions for what is termed the CUR decomposition in the literature.  The purpose of this and the next section is to compare these two viewpoints.  First, we demonstrate in Theorem \ref{THM:Characterization} that in the exact decomposition case, these viewpoints are in fact one and the same. However, in the process of doing so, we do more by giving several equivalent characterizations of when an exact CUR decomposition is obtained. Before stating this theorem, we make note of some useful facts about the matrices involved.

\begin{lemma}\label{LEM:Projections}
Suppose that $A, C, U,$ and $R$ are as in Theorem \ref{THM:CUR}, with $\rank(A)=\rank(U)$.  Then $\mathcal{N}(C)=\mathcal{N}(U)$, $\mathcal{N}(R^*)=\mathcal{N}(U^*)$, $\mathcal{N}(A)=\mathcal{N}(R)$, and $\mathcal{N}(A^*)=\mathcal{N}(C^*)$.  Moreover,
\[ C^\dagger C = U^\dagger U,\qquad  RR^\dagger=UU^\dagger,\]
\[ AA^\dagger = CC^\dagger, \quad \text{and} \quad A^\dagger A = R^\dagger R.\]
\end{lemma}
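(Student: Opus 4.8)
The plan is to establish Lemma~\ref{LEM:Projections} by first pinning down the nullspace relationships, and then deriving the projection identities as consequences. The key structural fact to exploit is the hypothesis $\rank(A)=\rank(U)$, which (as noted in the proof of Theorem~\ref{THM:CUR}) forces $\rank(C)=\rank(R)=\rank(U)=\rank(A)$. This rank coincidence is the engine behind every claim, because it says that passing from $A$ to $C$ (deleting columns) or from $A$ to $R$ (deleting rows), and likewise passing between $U$ and $C$ or $R$, loses no rank, and hence cannot enlarge the relevant nullspaces beyond what a trivial inclusion already gives.

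First I would handle $\mathcal{N}(C)=\mathcal{N}(U)$. Recalling the row-selection matrix $P_I$ from Theorem~\ref{THM:CUR} satisfying $P_I C = U$, the inclusion $\mathcal{N}(C)\subseteq\mathcal{N}(U)$ is immediate: if $Cx=0$ then $Ux=P_ICx=0$. For the reverse inclusion I would argue by dimension counting via rank-nullity. Since $C\in\K^{m\times s}$ and $U\in\K^{t\times s}$ share the same domain $\K^s$, and since $\rank(C)=\rank(U)$ by the observation above, the rank--nullity theorem gives $\dim\mathcal{N}(C)=s-\rank(C)=s-\rank(U)=\dim\mathcal{N}(U)$. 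A subspace contained in another of equal finite dimension must coincide with it, so $\mathcal{N}(C)=\mathcal{N}(U)$. The claim $\mathcal{N}(R^*)=\mathcal{N}(U^*)$ follows by the symmetric argument applied to the adjoints, using a column-selection matrix $P_J$ with $U^* = P_J^* R^*$ (equivalently, by applying the already-proved statement to $A^*$, whose relevant submatrices are $R^*,U^*$). The statements $\mathcal{N}(A)=\mathcal{N}(R)$ and $\mathcal{N}(A^*)=\mathcal{N}(C^*)$ are proved identically: $\mathcal{N}(A)\subseteq\mathcal{N}(R)$ because $R=P_IA$, and equality again follows since $\rank(A)=\rank(R)$ forces equal nullspace dimensions (both matrices acting on $\K^n$).

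For the projection identities I would lean on the standard fact that for any matrix $M$, the operator $M^\dagger M$ is the orthogonal projection onto $\mathcal{R}(M^*)$, equivalently onto $\mathcal{N}(M)^\perp$, while $MM^\dagger$ is the orthogonal projection onto $\mathcal{R}(M)$. Thus $C^\dagger C$ and $U^\dagger U$ are both orthogonal projections, the former onto $\mathcal{N}(C)^\perp$ and the latter onto $\mathcal{N}(U)^\perp$; since an orthogonal projection is determined by its range and we have just shown $\mathcal{N}(C)=\mathcal{N}(U)$, the two projections coincide, giving $C^\dagger C=U^\dagger U$. Dually, $RR^\dagger$ and $UU^\dagger$ are the orthogonal projections onto $\mathcal{R}(R)=\mathcal{N}(R^*)^\perp$ and $\mathcal{R}(U)=\mathcal{N}(U^*)^\perp$, which agree by $\mathcal{N}(R^*)=\mathcal{N}(U^*)$, yielding $RR^\dagger=UU^\dagger$. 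Finally, $AA^\dagger$ and $CC^\dagger$ project onto $\mathcal{N}(A^*)^\perp$ and $\mathcal{N}(C^*)^\perp$, equal by $\mathcal{N}(A^*)=\mathcal{N}(C^*)$, so $AA^\dagger=CC^\dagger$; and $A^\dagger A$, $R^\dagger R$ project onto $\mathcal{N}(A)^\perp$ and $\mathcal{N}(R)^\perp$, equal by $\mathcal{N}(A)=\mathcal{N}(R)$, so $A^\dagger A=R^\dagger R$.

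The main obstacle, such as it is, lies entirely in the reverse nullspace inclusions: the trivial inclusions come for free from the selection matrices, but to upgrade them to equalities one genuinely needs the rank hypothesis, and it is worth being careful that the two matrices in each pair act on the \emph{same} ambient space so that rank--nullity can be applied to force the dimension match. Once the nullspace equalities are secured, the projection identities are essentially formal, following from the characterization of $M^\dagger M$ and $MM^\dagger$ as the orthogonal projections onto $\mathcal{N}(M)^\perp$ and $\mathcal{R}(M)$ respectively, together with the uniqueness of an orthogonal projection given its range.
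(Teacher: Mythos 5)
Your proof is correct and follows essentially the same route as the paper: the rank hypothesis forces $\rank(C)=\rank(R)=\rank(U)=\rank(A)$, the nullspace equalities follow (the paper leaves the trivial-inclusion-plus-rank--nullity step implicit, which you rightly spell out, including the check that each pair acts on the same ambient space), and the projection identities then follow from the characterization of $M^\dagger M$ and $MM^\dagger$ as orthogonal projections onto $\mathcal{N}(M)^\perp$ and $\mathcal{R}(M)$, which are determined by their ranges.
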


\begin{proof}
As noted in the proof of Theorem \ref{THM:CUR}, the constraint that $\rank(U)=\rank(A)=k$ implies also that $C$ and $R$ have rank $k$ as well.  The statements about the null spaces follows directly from this observation.  To prove the moreover statements, notice that if $C\in\K^{m\times s}$, then $C^\dagger C:\K^s\to\K^s$ is the orthogonal projection onto $\mathcal{N}(C)^\perp$.  However, $\mathcal{N}(C)=\mathcal{N}(U)$ since their ranks are the same and $U$ is obtained by selecting certain rows of $C$, and hence $\mathcal{N}(C)^\perp=\mathcal{N}(U)^\perp$.  Therefore, $C^\dagger C$ is the orthogonal projection onto $\mathcal{N}(U)^\perp$, but this is $U^\dagger U$.  Similarly, $RR^\dagger$ is the projection onto $\mathcal{N}(R^*)^\perp = \mathcal{N}(U^*)^\perp$, whence $RR^\dagger=UU^\dagger$.  The final two statements follow by the same reasoning, so the details are omitted.
\end{proof}

\begin{lemma}\label{rankE}
	Let $A\in\mathbb{K}^{m\times n}$ and $B\in\mathbb{K}^{n\times p}$. If $\rank(A)=\rank(B)=n$, then $\rank(AB)=n$.
	\end{lemma}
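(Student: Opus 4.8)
The plan is to convert the two full-rank hypotheses into dual statements about the range and nullspace of the maps, and then track how these behave under composition; once that translation is made, the conclusion is a one-line dimension count.

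First I would record what the hypotheses mean geometrically. Since $A\in\K^{m\times n}$ has $\rank(A)=n$, its $n$ columns are linearly independent, so $A$ has full column rank (forcing $m\geq n$) and $\mathcal{N}(A)=\{0\}$. Since $B\in\K^{n\times p}$ has $\rank(B)=n$ while $B$ has only $n$ rows, $B$ has full row rank (forcing $p\geq n$), which is the same as saying $B$ maps onto $\K^n$, i.e. $\mathcal{R}(B)=\K^n$.

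Next I would compute the range of the product directly. By definition $\mathcal{R}(AB)=\{A(Bx):x\in\K^p\}=A(\mathcal{R}(B))$, and substituting $\mathcal{R}(B)=\K^n$ gives $\mathcal{R}(AB)=A(\K^n)=\mathcal{R}(A)$. Taking dimensions yields $\rank(AB)=\dim\mathcal{R}(AB)=\dim\mathcal{R}(A)=\rank(A)=n$, which is the claim. An equivalent route goes through nullspaces: if $ABx=0$ then $Bx\in\mathcal{N}(A)=\{0\}$, so $\mathcal{N}(AB)=\mathcal{N}(B)$, and rank--nullity then gives $\rank(AB)=p-\dim\mathcal{N}(B)=p-(p-n)=n$.

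There is no serious obstacle here, since the statement is elementary; the only point demanding care is the bookkeeping that turns the numerical hypotheses into the geometric facts used above. In particular, one must invoke that $B$ has exactly $n$ rows to deduce that $\rank(B)=n$ forces surjectivity $\mathcal{R}(B)=\K^n$. This is the single place the hypothesis on $B$ genuinely enters, and it is precisely what prevents the product from losing rank under the injective map $A$.
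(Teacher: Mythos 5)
Your proof is correct, but it follows a different route than the paper, which omits the proof entirely and simply remarks that the lemma ``is a straightforward exercise using Sylvester's rank inequality.'' The intended argument is thus a two-line bound: Sylvester gives $\rank(AB)\geq \rank(A)+\rank(B)-n = n+n-n = n$, while the trivial inequality $\rank(AB)\leq\min\{\rank(A),\rank(B)\}=n$ closes the sandwich. Your argument instead translates the hypotheses into injectivity of $A$ (full column rank, $\mathcal{N}(A)=\{0\}$) and surjectivity of $B$ (full row rank, $\mathcal{R}(B)=\K^n$), and then composes: $\mathcal{R}(AB)=A(\mathcal{R}(B))=A(\K^n)=\mathcal{R}(A)$. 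This buys you something slightly stronger than the stated lemma --- you identify the range of $AB$ exactly (and, in your alternative route, the nullspace: $\mathcal{N}(AB)=\mathcal{N}(B)$), rather than only its dimension --- and it is self-contained, invoking no named inequality. The paper's route is shorter if one takes Sylvester as known, and it generalizes mechanically to situations where neither factor is full rank; your geometric version makes transparent \emph{why} no rank is lost, namely that $B$ fills all of $\K^n$ before the injective map $A$ is applied. Both are complete proofs; yours would serve as a proof of the Sylvester bound in this special case rather than an application of it.
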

	The proof of Lemma \ref{rankE} is a straightforward exercise using Sylvester's rank inequality, and so is omitted.
	
	\begin{corollary}\label{CR_RANK_U}
	   Let $A\in\mathbb{K}^{m\times n}$ with $\rank(A)=k$. Let $C=A(:,J)$ and $R=A(I,:)$ with $\rank(C)=\rank(R)=k$. Then $\rank(U)=k$ with $U=A(I,J)$.
	\end{corollary}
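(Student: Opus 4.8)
The plan is to use the hypothesis $\rank(C)=\rank(A)$ to obtain a factorization of $A$ through $C$, and then read off the induced factorization of $R$ through $U$. First I would observe that since $C=A(:,J)$ is a column submatrix, $\mathcal{R}(C)\subseteq\mathcal{R}(A)$; because $\rank(C)=k=\rank(A)$, these ranges must coincide. Hence every column of $A$ lies in $\mathcal{R}(C)$, which gives a factorization $A=CX$ for some matrix $X$ (exactly as in the opening line of the proof of Theorem \ref{THM:CUR}).

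Next I would restrict this identity to the rows indexed by $I$. Writing $P_I$ for the row-selection matrix, so that $P_I A=R$, selecting rows of a product touches only the left factor: $P_I(CX)=(P_IC)X$. Since $P_IC=C(I,:)=U$ by definition of $U$, this yields $R=UX$. The structural fact being exploited here is that $U$ is simultaneously the row restriction $C(I,:)$ of $C$ and the column restriction $R(:,J)$ of $R$; this is precisely what allows the two separate rank hypotheses on $C$ and $R$ to interact.

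From $R=UX$ and submultiplicativity of rank, $k=\rank(R)=\rank(UX)\le\rank(U)$. For the reverse inequality, note that $U=C(I,:)$ is a submatrix of $C$, so $\rank(U)\le\rank(C)=k$ (equivalently, $U=R(:,J)$ forces $\rank(U)\le\rank(R)=k$). Combining the two bounds gives $\rank(U)=k$, as claimed.

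I do not anticipate a genuine obstacle; the only point needing care is the bookkeeping that row-selection commutes with the product as $P_I(CX)=(P_IC)X$, which must be read as ordinary matrix multiplication by a selection matrix and therefore remains valid even when $I$ and $J$ contain repeated indices, in the spirit of Remark \ref{REM:CUR}. The argument is fully symmetric: using $\rank(R)=\rank(A)$ one instead writes $A=YR$, restricts columns to obtain $C=YU$, and concludes identically. If one prefers, the lower bound $\rank(U)\ge k$ can also be framed through Sylvester's inequality (Lemma \ref{rankE}) rather than bare submultiplicativity, but this refinement is not required.
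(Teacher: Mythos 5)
Your proof is correct, but it takes a genuinely different route from the paper's. The paper argues through the truncated SVD: writing $A=W_k\Sigma_kV_k^*$, it notes $C=W_k\Sigma_k(V_k(J,:))^*$ and $R=W_k(I,:)\Sigma_kV_k^*$, deduces from the rank hypotheses that $V_k(J,:)$ and $W_k(I,:)$ each have full rank $k$, and then applies Lemma \ref{rankE} to the factorization $U=W_k(I,:)\Sigma_k(V_k(J,:))^*$. You instead bypass the SVD entirely: the range equality $\mathcal{R}(C)=\mathcal{R}(A)$ gives $A=CX$, row selection gives $R=P_IA=(P_IC)X=UX$, and the sandwich $k=\rank(R)\leq\rank(U)\leq\rank(C)=k$ finishes the argument. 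Your version is more elementary and purely rank-theoretic --- it needs neither Lemma \ref{rankE} nor any spectral factorization, so it would work verbatim over an arbitrary field --- and it dovetails nicely with the proof of Theorem \ref{THM:CUR}, effectively running that proof's opening step in reverse, which makes the equivalence $(i)\Leftrightarrow(v)$ of Theorem \ref{THM:Characterization} feel like one mechanism rather than two. What the paper's SVD route buys in exchange is the intermediate conclusion that the sampled singular-vector blocks $W_k(I,:)$ and $V_k(J,:)$ retain full rank, a fact of independent use in CUR perturbation analyses. Your handling of repeated indices is also sound: every step is multiplication by a selection matrix $P_I$, so nothing breaks when $I$ or $J$ has redundant entries, consistent with Remark \ref{REM:CUR}.
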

	\begin{proof}
	Assume that $A$ has truncated SVD $A=W_k\Sigma_k V_k^*$. Then $C=W_k\Sigma_k (V_k(J,:))^*$. Since $\rank(C)=k$, we have $\rank(V_k(J,:))=k$. Similarly, we can conclude that $\rank(W_k(I,:))=k$. Note that $U=A(I,J)=W_k(I,:)\Sigma_k (V_k(J,:))^*$, hence by Lemma \ref{rankE}, we have $\rank(U)=k$.
	\end{proof}
	
As a side note, one can also demonstrate a different form for $U$ aside from simply intersection of $C$ and  $R$.

\begin{proposition}\label{PROP:U=RAC}
	Suppose that $A$, $C$, $U$, and $R$  are as in Theorem \ref{THM:CUR} (but without any assumption on the rank of $U$). Then \[U=RA^{\dagger}C.\]
\end{proposition}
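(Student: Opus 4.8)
The plan is to reduce the identity to the single defining property $AA^\dagger A = A$ of the pseudoinverse by representing $R$, $C$, and $U$ through the selection matrices already introduced in the proof of Theorem~\ref{THM:CUR}. Recall that $P_I$ denotes the row-selection matrix satisfying $P_I A = R$, and let $P_J$ be the column-selection matrix satisfying $A P_J = C$; by the definition of $U$ as the intersection of $C$ and $R$ we then have $U = P_I A P_J$ (equivalently $U = R P_J = P_I C$).

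With these identifications in hand, the computation is immediate:
\[ R A^\dagger C = (P_I A)\, A^\dagger\, (A P_J) = P_I \,(A A^\dagger A)\, P_J = P_I A P_J = U, \]
where the middle equality uses property (i) of the Moore--Penrose pseudoinverse recalled in Section~\ref{SEC:Notation}.

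The point worth emphasizing is why no hypothesis on $\rank(U)$ is required here, in contrast to Theorem~\ref{THM:CUR}. The identity $A A^\dagger A = A$ holds for every matrix $A$ regardless of rank, so the cancellation above never appeals to any relationship among the ranks of $A$, $C$, $R$, and $U$. This is exactly what breaks down for the decomposition $A = C U^\dagger R$, where one genuinely needs $\rank(U) = \rank(A)$ to guarantee that the relevant pseudoinverse behaves as an inverse on the appropriate subspace; the present formula sidesteps that issue entirely because it reassembles a factor of $A$ rather than inverting $U$.

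I do not anticipate a real obstacle: the only thing to be careful about is the bookkeeping of the selection matrices, namely that $P_I$ acts on the left and $P_J$ on the right, so that the two copies of $A$ flanking $A^\dagger$ recombine into a single $A A^\dagger A$. The extension to repeated rows and columns, as in Remark~\ref{REM:CUR}, is automatic, since $P_I$ and $P_J$ may have repeated rows or columns without affecting the identity $A A^\dagger A = A$.
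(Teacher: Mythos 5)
Your proof is correct, but it takes a genuinely different route from the paper's. The paper proves Proposition \ref{PROP:U=RAC} by writing out the full SVD $A=W_A\Sigma_AV_A^*$, expressing $C=W_A\Sigma_AV_A^*(:,J)$, $R=W_A(I,:)\Sigma_AV_A^*$, $U=W_A(I,:)\Sigma_AV_A^*(:,J)$, and then computing $RA^\dagger C$ directly, with the cancellation carried by $V_A^*V_A=I$, $W_A^*W_A=I$, and $\Sigma_A\Sigma_A^\dagger\Sigma_A=\Sigma_A$. You instead encode the submatrices via the selection matrices from the proof of Theorem \ref{THM:CUR} --- $R=P_IA$, $C=AP_J$, $U=P_IAP_J$ --- and collapse $RA^\dagger C=P_I(AA^\dagger A)P_J=P_IAP_J=U$ using only the Penrose identity $AA^\dagger A=A$. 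Your argument is shorter and strictly more general: since it invokes nothing but property (i) of the pseudoinverse, the identity $U=RGC$ in fact holds for \emph{any} matrix $G$ satisfying $AGA=A$ (any $\{1\}$-inverse), not just the Moore--Penrose pseudoinverse, and your closing observation that repeated rows and columns cost nothing is correct, since $P_I$ and $P_J$ with repeated rows/columns still satisfy the three identities used. What the paper's SVD computation buys in exchange is consistency with the surrounding toolkit: the same SVD representations of $C$, $R$, and $U$ are reused in Corollary \ref{CR_RANK_U}, so the paper pays the setup cost once and amortizes it. Your explanatory remark about why no rank hypothesis is needed --- the formula reassembles a factor of $A$ rather than inverting $U$ --- is also exactly the right diagnosis of the contrast with Theorem \ref{THM:CUR} and Proposition \ref{PROP:Udagger}.
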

\begin{proof}
Let the full singular value decomposition of $A$ be $A=W_A\Sigma_A V_A^*$. Then $C=A(:,J)=W_A\Sigma_AV_A^*(:,J)$, $R=A(I,:)=W_A(I,:)\Sigma_AV_A^*$, and $U=U_A(I,:)\Sigma_AV_A^*(:,J)$.
Therefore, we have
\begin{eqnarray*}
	RA^{\dagger}C&=&W_A(I,:)\Sigma_AV_A^*V_A\Sigma_A^\dagger W_A^*W_A\Sigma_AV_A^*(:,J)\\
	&=&W_A(I,:)\Sigma_A\Sigma_A^\dagger\Sigma_AV_A^*(:,J)\\
	&=&W_A(I,:)\Sigma_AV_A^*(:,J)=U.
\end{eqnarray*}
\end{proof}	
	
We are now in a position to prove our main theorem characterizing exact CUR decompositions (restated here for clarity).

\begin{theorem}\label{THM:Characterization}
Let $A\in\K^{m\times n}$ and $I\subset[m]$, $J\subset[n]$ (possibly having redundant entries).  Let $C=A(:,J)$, $U=A(I,J)$, and $R=A(I,:)$.  Then the following are equivalent:
\begin{enumerate}[(i)]
    \item\label{ITEM:Rank} $\rank(U)=\rank(A)$
    \item\label{ITEM:CUR} $A=CU^\dagger R$
    \item\label{ITEM:ACCARR} $A = CC^\dagger AR^\dagger R$
    \item\label{ITEM:Adagger} $A^\dagger = R^\dagger UC^\dagger$
    \item\label{ITEM:Spans} $\rank(C)=\rank(R)=\rank(A)$.
\end{enumerate}
\end{theorem}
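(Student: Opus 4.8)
The plan is to use conditions (i) and (v) as a hub and to show that they are equivalent to one another and that each of (ii), (iii), (iv) implies (v) while being implied by (i). The starting observation is purely structural and needs no rank hypothesis: since $C=A(:,J)$ is a column submatrix and $R=A(I,:)$ a row submatrix of $A$, and $U$ is a submatrix of each, one always has $\rank(U)\le\rank(C)\le\rank(A)$ and $\rank(U)\le\rank(R)\le\rank(A)$. Consequently (i) forces (v) by squeezing $\rank(C)$ and $\rank(R)$ between $\rank(U)=\rank(A)$ and $\rank(A)$, while the reverse implication (v)$\Rightarrow$(i) is exactly Corollary \ref{CR_RANK_U}. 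This disposes of (i)$\Leftrightarrow$(v) with essentially no work, and from now on I may assume the full strength of Lemma \ref{LEM:Projections} whenever (i) is in force.

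With the hub in place, I would establish the three forward implications from (i). The implication (i)$\Rightarrow$(ii) is just Theorem \ref{THM:CUR}. For (i)$\Rightarrow$(iii), I substitute the projection identities $CC^\dagger=AA^\dagger$ and $R^\dagger R=A^\dagger A$ from Lemma \ref{LEM:Projections}, obtaining $CC^\dagger A R^\dagger R = AA^\dagger A A^\dagger A$, which collapses to $A$ by two applications of the Moore--Penrose identity $AA^\dagger A=A$. For (i)$\Rightarrow$(iv), I first rewrite $U$ via Proposition \ref{PROP:U=RAC} as $U=RA^\dagger C$, so that $R^\dagger U C^\dagger = R^\dagger R\,A^\dagger\,CC^\dagger = A^\dagger A\,A^\dagger\,AA^\dagger$, which collapses to $A^\dagger$ via two applications of $A^\dagger A A^\dagger=A^\dagger$.

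To close every loop it then suffices to show that each of (ii), (iii), (iv) implies (v); these are all one-line rank bounds combined with the always-true inequalities above. From (ii), $\rank(A)=\rank(CU^\dagger R)\le\rank(C)\le\rank(A)$ and likewise $\rank(A)\le\rank(R)$, forcing equality. From (iii), $\rank(A)=\rank(CC^\dagger A R^\dagger R)$ is bounded by $\rank(C)$ and by $\rank(R)$ in the same manner. From (iv), using $\rank(A^\dagger)=\rank(A)$, the factorization $A^\dagger=R^\dagger U C^\dagger$ gives $\rank(A)\le\rank(C^\dagger)=\rank(C)$ and $\rank(A)\le\rank(R^\dagger)=\rank(R)$. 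In each case the squeeze yields (v), and then (v)$\Rightarrow$(i) closes the cycle.

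I expect the only genuinely delicate point to be (i)$\Rightarrow$(iv): because $(XY)^\dagger=Y^\dagger X^\dagger$ fails in general, one cannot simply transpose the factorization in (ii) to read off $A^\dagger$. The right move is to avoid pseudoinverting a product altogether and instead feed in the closed form $U=RA^\dagger C$ from Proposition \ref{PROP:U=RAC}, after which the range/projection identities of Lemma \ref{LEM:Projections} make the cancellation automatic. Everything else reduces to the two Moore--Penrose identities and the elementary submatrix rank inequalities.
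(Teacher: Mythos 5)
Your proof is correct and follows essentially the same route as the paper's: the hub at (i)$\Leftrightarrow$(v) via Corollary \ref{CR_RANK_U}, the forward implications via Theorem \ref{THM:CUR}, Lemma \ref{LEM:Projections}, and Proposition \ref{PROP:U=RAC} (including the key move of writing $U=RA^\dagger C$ to avoid pseudoinverting a product), and rank inequalities for the reverse directions. The only deviations are cosmetic streamlinings: the paper proves (ii)$\Rightarrow$(i) directly by contradiction through $\rank(CU^\dagger R)\leq\rank(U^\dagger)=\rank(U)$, proves (iii)$\Rightarrow$(v) via span inclusions rather than rank bounds, and closes (iv)$\Rightarrow$(i) by expanding $A=AA^\dagger A=AR^\dagger RA^\dagger CC^\dagger A$, whereas your uniform routing of all three backward implications through (v) --- in particular invoking $\rank(A^\dagger)=\rank(A)$ for (iv) --- is a mildly cleaner packaging of the same ideas.
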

\begin{proof}
Theorem \ref{THM:CUR} and Remark \ref{REM:CUR} give the implication $(\ref{ITEM:Rank})\Rightarrow(\ref{ITEM:CUR})$.  To see $(\ref{ITEM:CUR})\Rightarrow(\ref{ITEM:Rank})$,  suppose to the contrary that $\rank(U)\neq\rank(A)$.  By construction of $U$, this implies that $\rank(U)<\rank(A)$.  On the other hand,
\[\rank(A)=\rank(CU^\dagger R)\leq\rank(U^\dagger)=\rank(U)<\rank(A), \]
which yields a contradiction.  

The forward direction of $(\ref{ITEM:Rank})\Leftrightarrow(\ref{ITEM:Spans})$ is easily seen, while the reverse direction is the content of Corollary \ref{CR_RANK_U}, and $(\ref{ITEM:Spans})\Rightarrow(\ref{ITEM:ACCARR})$ is obvious given that under the assumption on the spans, $AA^\dagger = CC^\dagger$ according to Lemma \ref{LEM:Projections}.  To see $(\ref{ITEM:ACCARR})\Rightarrow(\ref{ITEM:Spans})$, recall that by construction, $\sspan(C)\subset\sspan(A)$, but $(\ref{ITEM:ACCARR})$ implies that $\sspan(A)\subset\sspan(C)$.  A similar argument shows that $\sspan(R^*)=\sspan(A^*)$, which yields $(\ref{ITEM:Spans})$.

Now suppose that $(\ref{ITEM:Rank})$ (and equivalently ($\ref{ITEM:CUR}$)) holds.  By Proposition \ref{PROP:U=RAC}, $U=RA^\dagger C$, hence the following holds:
\begin{align*}
    R^\dagger UC^\dagger &= R^\dagger RA^\dagger CC^\dagger\\
    &= A^\dagger AA^\dagger AA^\dagger\\
    & = A^\dagger,
\end{align*}
where the second equality follows from Lemma \ref{LEM:Projections} (which requires the assumption $(\ref{ITEM:Rank})$) and the last from properties defining the Moore--Penrose pseudoinverse.  Thus $(\ref{ITEM:Rank})\Rightarrow(\ref{ITEM:Adagger})$. 
Conversely, suppose that $A^\dagger = R^\dagger U C^\dagger$.  Then by Proposition \ref{PROP:U=RAC},
\[ A = AA^\dagger A = AR^\dagger UC^\dagger A = AR^\dagger RA^\dagger CC^\dagger A.\]
Hence $\rank(A)=\rank(AR^\dagger RA^\dagger CC^\dagger A)\leq\rank(C)\leq\rank(A)$. Thus, $\rank(A)=\rank(C)$. Similarly, $\rank(A)=\rank(R)$, and an appeal to Corollary \ref{CR_RANK_U} completes the proof of $(\ref{ITEM:Adagger})\Rightarrow(\ref{ITEM:Rank})$.
\end{proof}

Theorem \ref{THM:Characterization} provides several novel characterizations of exact CUR decompositions, and in particular, the equivalence of conditions (\ref{ITEM:CUR}) and $(\ref{ITEM:ACCARR})$ demonstrates that the two proposed viewpoints indeed match in the exact decomposition case (and only in this case).

We now turn to some auxiliary observations.

\begin{proposition}\label{PROP:Udagger}
Suppose that $A, C, U,$ and $R$ satisfy any of the equivalent conditions of Theorem \ref{THM:Characterization}.  Then \[ U^\dagger=C^\dagger A R^\dagger.\]
\end{proposition}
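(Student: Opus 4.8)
The plan is to verify that $X := C^\dagger A R^\dagger$ satisfies the four defining Moore--Penrose conditions for $U^\dagger$ and then to invoke uniqueness of the pseudoinverse. The two facts that drive the argument are the identity $U = RA^\dagger C$ from Proposition \ref{PROP:U=RAC} and the projection equalities of Lemma \ref{LEM:Projections}, both of which are available since we assume one (hence all) of the equivalent conditions of Theorem \ref{THM:Characterization}.

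First I would compute the two products $UX$ and $XU$ and show that they collapse to the projections $UU^\dagger$ and $U^\dagger U$, respectively. For $UX$, substituting $U = RA^\dagger C$ gives $UX = RA^\dagger (CC^\dagger) AR^\dagger$; replacing $CC^\dagger$ by $AA^\dagger$ via Lemma \ref{LEM:Projections} and using $A^\dagger A A^\dagger = A^\dagger$ reduces this to $R(A^\dagger A)R^\dagger$, whereupon $A^\dagger A = R^\dagger R$ together with the idempotency of $RR^\dagger$ yields $UX = RR^\dagger = UU^\dagger$. An entirely symmetric chain, using $R^\dagger R = A^\dagger A$, the identity $AA^\dagger A = A$, the equality $AA^\dagger = CC^\dagger$, and idempotency of $C^\dagger C$, gives $XU = C^\dagger C = U^\dagger U$.

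With these two identities in hand the four Penrose conditions follow quickly. The Hermiticity conditions are immediate, since $UX = UU^\dagger$ and $XU = U^\dagger U$ are Hermitian by the defining properties of the pseudoinverse recalled in Section \ref{SEC:Notation}. For the identity $UXU = U$ I would write $UXU = (UX)U = UU^\dagger U = U$, and for $XUX = X$ I would use the left factorization $XUX = (XU)X = U^\dagger U\, C^\dagger A R^\dagger = C^\dagger C\, C^\dagger A R^\dagger = C^\dagger A R^\dagger = X$, invoking $U^\dagger U = C^\dagger C$ and $C^\dagger C C^\dagger = C^\dagger$. Uniqueness of the Moore--Penrose inverse then forces $X = U^\dagger$, which is the claim.

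The main obstacle is purely bookkeeping: the collapse of $UX$ and $XU$ requires threading several of the Lemma \ref{LEM:Projections} substitutions in the correct order, and one must be attentive that each replacement (for instance $CC^\dagger \to AA^\dagger$) is legitimate only under the standing rank hypothesis. Once $UX = UU^\dagger$ and $XU = U^\dagger U$ are established, no further difficulty remains.
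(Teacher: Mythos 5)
Your proof is correct, but it takes a genuinely different route from the paper's. The paper's own proof is a three-line direct computation: since condition (ii) of Theorem \ref{THM:Characterization} gives $A = CU^\dagger R$, it substitutes this into $C^\dagger A R^\dagger$ and applies the identities $C^\dagger C = U^\dagger U$ and $RR^\dagger = UU^\dagger$ from Lemma \ref{LEM:Projections} to obtain $C^\dagger A R^\dagger = U^\dagger U U^\dagger U U^\dagger = U^\dagger$. You instead start from the rank-free identity $U = RA^\dagger C$ of Proposition \ref{PROP:U=RAC} and verify the four Penrose axioms for $X = C^\dagger A R^\dagger$, concluding by uniqueness of the pseudoinverse. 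I checked your collapses: $UX = RA^\dagger(CC^\dagger)AR^\dagger = RA^\dagger A R^\dagger = RR^\dagger = UU^\dagger$ and $XU = C^\dagger A(R^\dagger R)A^\dagger C = C^\dagger A A^\dagger C = C^\dagger C = U^\dagger U$ are both valid, each substitution (e.g.\ $CC^\dagger = AA^\dagger$, $R^\dagger R = A^\dagger A$) being licensed by Lemma \ref{LEM:Projections} under the standing rank hypothesis, and the remaining axioms $UXU=U$ and $XUX=X$ follow as you say. Your route costs more bookkeeping but buys something: it never invokes the exact factorization $A = CU^\dagger R$, only $U = RA^\dagger C$ (which holds unconditionally) plus the projection equalities, and the axiomatic-verification-plus-uniqueness template is the standard device for reverse-order-law identities of the type $(AB)^\dagger = B^\dagger A^\dagger$, which the paper emphasizes fail in general; in that sense your argument is closer in spirit to the paper's proof of $(\mathrm{i})\Rightarrow(\mathrm{iv})$ in Theorem \ref{THM:Characterization} than to its proof of Proposition \ref{PROP:Udagger}. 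The paper's computation, by contrast, buys brevity by exploiting condition (ii) directly.
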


\begin{proof}
On account of Lemma \ref{LEM:Projections} and the fact that $A=CU^\dagger R$,
\begin{align*}
C^\dagger AR^\dagger & = C^\dagger CU^\dagger  R R^\dagger \\
& = U^\dagger UU^\dagger UU^\dagger\\
& = U^\dagger,
\end{align*}
where the final step follows from basic properties of the Moore--Penrose pseudoinverse.
\end{proof}

\begin{remark}
The condition in Proposition \ref{PROP:Udagger} is not sufficient; if $U^\dagger=C^\dagger A R^\dagger$, then $\rank(A)$ may not equal to $\rank(U)$. For example, let
\[A=\begin{bmatrix}
0&I\\
I&0
\end{bmatrix},\] and let $C=\begin{bmatrix}
0\\
I
\end{bmatrix}$ and $R=\begin{bmatrix}
0&I
\end{bmatrix}$. Then $U=U^\dagger=0$, and $C^\dagger A R^\dagger=\begin{bmatrix}
0 &I
\end{bmatrix}\cdot \begin{bmatrix}
0&I\\
I&0
\end{bmatrix}\cdot \begin{bmatrix}
0\\
I
\end{bmatrix}=0$. Thus $U^\dagger=C^\dagger AR^\dagger$, but $\rank(A)\neq \rank(U)$.
\end{remark}

Note that Proposition \ref{PROP:U=RAC} holds for any choice of column and row submatrices $C$ and $R$.  However, Proposition \ref{PROP:Udagger} requires the additional assumption that $U$ and $A$ have the same rank.  Recall that Proposition \ref{PROP:Udagger} does not follow immediately from Proposition \ref{PROP:U=RAC} without this additional assumption given the fact that $(AB)^\dagger$ is not $B^\dagger A^\dagger$ in general.  Additionally, since the conclusion of Proposition \ref{PROP:Udagger} does not imply $\rank(U)=\rank(A)$, then it cannot imply any of the equivalent conditions given in Theorem \ref{THM:Characterization} in general.  We end this section with an interesting question related to this proposition.  Evidently, if $U^\dagger=C^\dagger AR^\dagger$, then $CU^\dagger R = CC^\dagger AR^\dagger R$.  Does the converse always hold? Note that if $\rank(U)=\rank(A)$, then the converse is true by Theorem \ref{THM:Characterization} because both quantities are $A$.  However, to show that it holds in general, one must determine if it holds in the case that $A\neq CU^\dagger R$.  At the moment, we leave this as an open question; however, numerical experiments give evidence that it is possibly true.

\section{Distinctness in the Approximation Case}\label{SEC:Difference}

Now let us give a simple example to show that in the CUR approximation case, the choice of $C^\dagger AR^\dagger$ for the middle matrix in the CUR approximation indeed gives a better approximation than using $U^\dagger$, and in fact these matrices are not the same in this case.

\begin{example}
Consider the full rank matrix \[
A = \begin{bmatrix}1 & 2\\ 3 & 4\\
\end{bmatrix},
\]
and let \[C = A(:,1) = \begin{bmatrix}1\\3\\
\end{bmatrix}, \text{ and } R = A(1,:) = \begin{bmatrix}1 & 2\\ \end{bmatrix}.\]

Then $U=U^\dagger = 1$, and notice that $\|A-CU^\dagger R\|_F = 2$.  However, minimizing the function $f(z) = \|A-CzR\|_F$ yields a minimal value of $1.0583$ at $z=0.76$, which is $C^\dagger AR^\dagger$.  This simple example demonstrates that in the approximation case when fewer rows and columns are chosen than the rank of $A$, the matrices $U^\dagger$ and $C^\dagger AR^\dagger$ are different in general.
\end{example}

\section{A History}\label{SUBSEC:History}

A precise history of the CUR decomposition (Theorem \ref{THM:CUR}) is somewhat elusive and its origins appear to be folklore at this point.  Many papers cite Gantmacher's book \cite{Gantmacher} without providing a specific location, but noting that the term \textit{matrix skeleton} is used therein.  The authors could not verify this source, as a search of the term \textit{skeleton} in a digital copy of the book turned up no results.  However, we find it implicitly in a paper by Penrose from 1956 \cite{Penrose56} (this is a follow-up paper to the one defining the pseudoinverse that now bears his name).  Therein, Penrose notes (albeit without proof) that any matrix may be written (possibly after rearrangement of rows and columns) in the form
\[ A = \begin{bmatrix} 
B & D\\ E & EB^{-1}D
\end{bmatrix},\]
where $B$ is any nonsingular submatrix of $A$ with $\rank(B)=\rank(A)$, whereupon one can immediately get a valid CUR decomposition for this matrix by choosing $R = \begin{bmatrix}B & D\end{bmatrix}$ and $C =\begin{bmatrix} B \\ E\end{bmatrix}$.  Subsequently, Theorem \ref{THM:CUR} is stated without proof in the case that $U$ is square and invertible in \cite{Goreinov}.  The proof for square submatrices $U$ that are not full rank appears in \cite{CaiafaTensorCUR}.  To the authors' knowledge, the first time the direct proof of the general rectangular $U$ case appears in the literature was in \cite{AHKS}; the proof given here is essentially the one given therein.

The trail of the CUR decomposition as a computational tool runs cold for some time after Penrose's paper, but may be picked up again in the works of Goreinov, Zamarashkin, and Tyrtyshnikov \cite{Goreinov,Goreinov2,Goreinov3}.  The authors therein take for granted the exact CUR decomposition of the form $A=CU^{-1}R$, which is a special case of Theorem \ref{THM:CUR} whenever exactly $\rank(A)$ rows and columns are chosen to form $U$.  From this launching point, they ask the question: if $A$ is approximately low rank, then how can one obtain a good CUR approximation to $A$ in the spectral norm?  However, their analysis is for general matrices $U$ rather than simply being of the form $U=A(I,J)$.  They provide precise estimates on CUR approximations in terms of a related min-max quantity.  These estimates are universal in the sense that the derived upper bound is not dependent upon the given matrix $A$.

The works of Goreinov, Zamarashkin, and Tyrtyshnikov are perhaps the modern starting point of CUR approximations, and have since sparked a significant amount of activity in the area.  Specifically, Drineas, Kannan, and Mahoney, have considered a large variety of CUR approximations inspired by the analysis of \cite{Goreinov}.  They again admit flexibility in the choice of the matrix $U$, and prove many relative and additive error bounds for their approximations, as well as determining algorithms for computing the approximations which are computationally cheap,  \cite{Bien,DKMIII,DM05,DMM08,DMPNAS}.  A typical result in these works quantifies how well a CUR approximation with randomly oversampled columns and rows approximates the truncated SVD up to some penalty.  Similar work has been done by Chiu and Demanet \cite{DemanetWu} for uniform sampling of columns and rows with an additional coherency assumption on the given matrix $A$.  For fast deterministic CUR methods, consult \cite{SorensenDEIMCUR,VoroninMartinsson}

Applications of CUR approximations have become prevalent, including works on astronomical object detection \cite{Yip14}, mass spectrometry imaging \cite{Yang15}, matrix completion \cite{Xu15}, the joint learning problem \cite{CURTPAMI}, and subspace clustering \cite{AHKS,aldroubi2018similarity}.

A very general framework for randomized low-rank matrix approximations is given in the excellent work of Halko, Martinsson, and Tropp \cite{tropp}, wherein they discuss CUR approximations as well as related methods such as interpolative decompositions and randomized approximations to the SVD.

The Column Subset Selection Problem (CSSP) has been well-studied in the theoretical computer science literature \cite{AltschulerGreedyCSSP, Boutsidis2009, Deshpande2010, LiDeterministicCSSP, OrdozgoitiCSSP, tropp2009column, Yang}.  Indeed as a dimension reduction tool for data analysis, the CSSP is completely natural.  Many such methods attempt to represent given data in terms of a basis of reduced dimension which capture the essential information of the data.  Whereas Principal Component Analysis may result in a loss of interpretability as mentioned before, column selection corresponds to choosing actual columns of the data, and hence the minimization in the CSSP is attempting to find the best features that capture the most information of the data.  Feature selection as a preconditioner to task-based machine learning algorithms -- e.g. neural network or support vector machine classifiers -- is a critical step in many applications, and thus a thorough understanding of the CSSP is important for the analysis of data.  Column selection has also been applied in drawing large graphs \cite{KhouryScheidegger}.

As for complexity, the CSSP is believed to be NP--hard, with a purported proof given by Shitov \cite{Shitov}; a proof of its UG--hardness was given already by \c{C}ivril \cite{Civril}.  UG--hardness is a relaxed notion which states that a problem is NP--hard assuming the \textit{Unique Games Conjecture} (see \cite{Khot} for a formal description).


\section*{Acknowledgements}  K. H. is partially supported by the National Science Foundation TRIPODS program, grant number NSF CCF--1740858. LX.H. is partially supported by NSF Grant DMS-1322099.  Much of the work for this article was done while LX.H. was a graduate student at Vanderbilt University.

We are indebted to Amy Hamm Design for making our sketches of Figures \ref{FIG:CUR}, and \ref{FIG:CURSpace2} a reality.  K. H. thanks Vahan Huroyan for many helpful discussions related to this work, and David Glickenstein and Jean-Luc Bouchot for comments on a preliminary version of the manuscript.

\bibliographystyle{plain}
\bibliography{HammHuang}

\end{document}